\newtheorem{theorem}{Theorem}[section]
\newtheorem{lemma}{Lemma}[section]
\newtheorem{defi}{Definition}[section]
\newtheorem{remark}{Remark}[section]
\newcommand{\med}{{\int\!\!\!\!\!\!\!\!-\!\! -\!\!\!\!}}
\newcommand{\R}{{\mathbb R}}
\newcommand{\N}{{\mathbb N}}
\begin{document}
   \title[H\"older regularity for non divergence form elliptic equations]
   { H\"older regularity for non divergence form elliptic equations with discontinuous coefficients }
   \author{G. Di Fazio}
   \address{Dipartimento di Matematica e Informatica\\ Universit\`a di Catania\\
   Viale A. Doria 6, 95125, Catania, Italy}
   \email{difazio@dmi.unict.it}
\author{M. S. Fanciullo}
   \address{Dipartimento di Matematica e Informatica\\ Universit\`a 
   di Catania\\
   Viale A. Doria 6, 95125, Catania, Italy}
   \email{fanciullo@dmi.unict.it}

    \author{P. Zamboni}
   \address{Dipartimento di Matematica e Informatica\\ Universit\`a 
   di Catania\\
   Viale A. Doria 6, 95125, Catania, Italy}
   \email{zamboni@dmi.unict.it}
   
 \keywords{H\"older regularity, elliptic linear equation, VMO classes}
   \subjclass[2000]{35J15, 35B15}
   \thanks{}
\date{\today}

\linespread{1.3}

\begin{abstract}
In this note we study the global regularity in the Morrey spaces $L^{p,\lambda} $for the second derivatives for the strong solutions of non variational elliptic equations. 
\end{abstract}
\maketitle
%\tableofcontents

\section{Introduction}

The aim of this note is to study the global Morrey regularity for the second derivatives of the strong solutions of non variational elliptic equations. 
Namely, given a bounded domain $\Omega\subset\R^n$, we consider the linear equation
$$
a_{ij}u_{x_ix_j} + b_i u_{x_i} + cu =f\,,
$$
where the lower order coefficients $b$, $c$ and $f$ are assumed in the Morrey space $L^{p,\lambda}$ ($1<p<+\infty$, $n-p<\lambda<n$) and the coefficients of the leading part are assumed in the class $VMO \cap L^\infty$.

As a consequence of our $W^{2,p}$ estimate (see Section 3) we obtain the H\"older continuity of the gradient.

The technique we use is quite simple; it is based on a multiplicative inequality for functions in Morrey classes
combined with an iterative procedure. 

The same problem has been studied in several papers by many Authors. Among them, we cite  \cite{caf1} and \cite{caf2} where, in the case $c=b=0$, Caffarelli proved that if $f$ belongs to the Morrey space $L^{n,n\alpha}$, with $0<\alpha<1$, then every $W^{2,p}$ - viscosity solution $u$ is of class $C^{1,\alpha}$. 
Subsequently, in \cite{dr} and \cite{dpr}, Caffarelli result were improved in a special case obtaining gradient regularity for any $p$.
The result in \cite{dr} was obtained via a representation formula for the second derivatives of the solutions used in \cite{cfl}  and the study of some non convolution type integral operators. The result in \cite{dpr} can be recovered from our by letting $b$ and $c$ identically zero.

\section{Preliminaries}
Let $\Omega$ be a bounded open set in $\R^n$ ($n\ge 3$). If $f\in L^1(\Omega)$ and $E\subset \Omega$ we set $f_E=\frac{1}{|E|}\int_E fdx$. 

We recall some classical definitions.
\begin{defi}
Let $1\le p<+\infty$, $0<\lambda<n$, $\Omega$ a bounded domain in $\R^n$. A function $f\in L^p(\Omega)$ belongs to the Morrey space $L^{p,\lambda}(\Omega)$ if
$$\|f\|^p_{L^{p,\lambda}(\Omega)}=\sup  r^{-\lambda}\int_{B_r(x_0)\cap\Omega}|f|^pdx<+\infty\,,$$
the supremum being taken over $x_0\in \Omega$ and $0<r\le{\rm diam}\Omega$.
\end{defi}
It is well known that $L^{p,\lambda}(\Omega)$ is a Banach space endowed with the above norm. 

\begin{defi}
Let $1\le p< +\infty$, $0<\lambda <n+p$, $\Omega$ a bounded domain in $\R^n$. A function $f\in L^p(\Omega)$ belongs to the Campanato space ${\mathcal L}^{p,\lambda}(\Omega)$ if
\begin{equation}\label{campanatodefinizione}
[f]^p_{{\mathcal L}^{p,\lambda}(\Omega)}=\sup  r^{-\lambda}\int_{B_r(x_0)\cap\Omega}|f-f_{B_r(x_0)\cap\Omega}|^p dx<+\infty\,,
\end{equation}
the supremum being taken over $x_0\in \Omega$ and $0<r\le{\rm diam}\Omega$.
\end{defi}

If $\lambda=n$ the definition gives back the definition of the space  $BMO$.

\begin{defi}
Let $\Omega$ be a bounded domain in $\R^n$. A function $f\in BMO (\Omega)$  belongs to $VMO(\Omega)$ if 
$$\eta(r)= \sup\med_{B_\rho(x_0)\cap\Omega}|f-f_{B_\rho(x_0)\cap \Omega}|dx$$
vanishes as $r\to 0^+$. Here the supremum is taken over $x_0 \in \Omega$ and $0<\rho<r$.
\end{defi}

The space ${\mathcal L}^{p,\lambda}$ is a Banach space endowed with the norm
$$ \|f\|_{{\mathcal L}^{p,\lambda}(\Omega)}=\|f\|_{L^p(\Omega)}+[f]_{{\mathcal L}^{p,\lambda}(\Omega)}\,,$$
where  $[f]_{{\mathcal L}^{p,\lambda}(\Omega)}$ is given by \eqref{campanatodefinizione}.

\begin{defi}
Let $\Omega$ be a bounded domain in $\R^n$.
We say that $\Omega$ satisfies the condition $K$ if there exists a positive constant $K>0$ such that 
$$|B_r(x_0)\cap\Omega|\ge K r^n\,,$$
 $\forall x_0\in \Omega$ and $0<r\le {\rm diam}\Omega$.
\end{defi}

\begin{remark}
Any Lipschitz domain satisfies the condition $K$.
\end{remark}

We state some useful results we need in the sequel.  

\begin{theorem}\label{cam}(\cite{cam}, Theorem 2.I). Let $\Omega\subset\R^n$ be a bounded domain satisfying condition $K$. 
Then 
\begin{itemize}
\item[1.] 
If $0<\lambda<n$, ${\mathcal L}^{p,\lambda}(\Omega)=L^{p,\lambda}(\Omega)$ and there exist two positive constants $C_1$ and $C_2$ such that
$$
C_1 [f]_{{\mathcal L}^{p,\lambda}}\le \|f\|_{L^{p,\lambda}}\le C_2\|f\|_{{\mathcal L}^{p,\lambda}}\,.
$$
\item[2.]
If $n<\lambda\le n+p$, ${\mathcal L}^{p,\lambda}(\Omega)=C^{0,\gamma}(\overline\Omega)$, with $\gamma =\frac{\lambda-n}{p}$, and there exist two positive constants $C_3$ and $C_4$ such that
$$C_3[f]_{{\mathcal L}^{p,\lambda}}\le [f]_{\gamma}\le C_4[f]_{{\mathcal L}^{p,\lambda}} \,,$$
where $[f]_{\gamma}= \sup\limits_{x\neq y}\dfrac{|f(x)-f(y)|}{|x-y|^\gamma}$.
\end{itemize}
\end{theorem}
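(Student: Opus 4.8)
The plan is to deduce both equivalences from one basic ingredient: on a domain satisfying condition $K$, the mean values of $f$ over concentric intersected balls of comparable radii cannot differ too much. Write $R_0={\rm diam}\,\Omega$ and, for $x_0\in\Omega$ and $\rho>0$, abbreviate $B_\rho=B_\rho(x_0)\cap\Omega$. Using $f_{B_r}-f_{B_{2r}}=(f-f_{B_{2r}})_{B_r}$, the lower bound $|B_r|\ge Kr^n$ coming from condition $K$, H\"older's inequality, and the defining property of the Campanato seminorm applied on $B_{2r}$, one obtains the two-radius estimate
$$|f_{B_r}-f_{B_{2r}}|\le C\,[f]_{{\mathcal L}^{p,\lambda}}\,r^{(\lambda-n)/p},\qquad 0<r\le R_0/2,$$
with $C$ depending only on $n$, $p$ and $K$. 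This single inequality, summed along the dyadic scale $r,\,r/2,\,r/4,\dots$, drives everything; the sign of the exponent $(\lambda-n)/p$ decides which of the two regimes we are in.

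The two ``trivial'' inclusions require no iteration and, in fact, no condition $K$. If $f\in L^{p,\lambda}(\Omega)$ with $0<\lambda<n$, then by Jensen $|f_{B_r}|^p\le |B_r|^{-1}\int_{B_r}|f|^p$, hence $\int_{B_r}|f-f_{B_r}|^p\le 2^p\int_{B_r}|f|^p\le 2^p\|f\|_{L^{p,\lambda}}^p\,r^\lambda$, which gives $C_1[f]_{{\mathcal L}^{p,\lambda}}\le\|f\|_{L^{p,\lambda}}$. Symmetrically, if $n<\lambda\le n+p$ and $f\in C^{0,\gamma}(\overline\Omega)$ with $\gamma=(\lambda-n)/p$, then $|f(x)-f_{B_r}|\le[f]_\gamma(2r)^\gamma$ for $x\in B_r$, so $\int_{B_r}|f-f_{B_r}|^p\le[f]_\gamma^p(2r)^{\gamma p}|B_r|\le C[f]_\gamma^p\,r^{\gamma p+n}=C[f]_\gamma^p\,r^\lambda$ because $|B_r|\le c_nr^n$, giving $C_3[f]_{{\mathcal L}^{p,\lambda}}\le[f]_\gamma$.

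For the converse inclusion in part 1 (where $\lambda<n$, so the exponent $(\lambda-n)/p$ is negative), summing the two-radius estimate from $R_0$ down to $r$ produces a geometric-type series dominated by its last term, whence $|f_{B_r}-f_\Omega|\le C[f]_{{\mathcal L}^{p,\lambda}}\,r^{(\lambda-n)/p}$ for $0<r\le R_0$. Then $\int_{B_r}|f|^p\le 2^{p-1}\big([f]_{{\mathcal L}^{p,\lambda}}^p r^\lambda+|B_r|\,|f_{B_r}|^p\big)$, and since $|B_r|\le c_nr^n$ the factor $r^{(\lambda-n)/p}$ turns back into $r^{\lambda/p}$ in the last term, while $|f_\Omega|\le C\|f\|_{L^p(\Omega)}$ and $r^n\le R_0^{n-\lambda}r^\lambda$ take care of the constant part; taking the supremum over $x_0$ and $r$ yields $\|f\|_{L^{p,\lambda}}\le C_2\|f\|_{{\mathcal L}^{p,\lambda}}$. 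For the converse in part 2 (where $n<\lambda\le n+p$, so $\gamma=(\lambda-n)/p>0$), the series $\sum_k(2^{-k}r)^\gamma$ now converges, so $\{f_{B_{2^{-k}r}}\}_k$ is Cauchy; its limit $\widetilde f(x_0)$ satisfies $|f_{B_r}-\widetilde f(x_0)|\le C[f]_{{\mathcal L}^{p,\lambda}}\,r^\gamma$, and by the Lebesgue differentiation theorem — legitimate here because condition $K$ forces $|B_\rho(x_0)\cap\Omega|\simeq\rho^n$ — one has $\widetilde f=f$ a.e. Finally, for $x,y\in\Omega$ with $d=|x-y|$, comparing $f_{B_d(x)\cap\Omega}$ and $f_{B_d(y)\cap\Omega}$ to the mean of $f$ over the common larger intersected ball $B_{2d}(x)\cap\Omega$ (the same H\"older-plus-condition-$K$ computation as in the two-radius estimate) gives $|\widetilde f(x)-\widetilde f(y)|\le C[f]_{{\mathcal L}^{p,\lambda}}\,d^\gamma$, so $f$ has a representative in $C^{0,\gamma}(\overline\Omega)$ with $[f]_\gamma\le C_4[f]_{{\mathcal L}^{p,\lambda}}$.

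The step I expect to be the crux — and the only place where condition $K$ is genuinely used — is the two-radius estimate together with the Lebesgue-point identification of $\widetilde f$ with $f$: both rest on replacing an average over $B_r(x_0)\cap\Omega$ by an integral over a comparable ball, which is exactly what the bound $|B_r(x_0)\cap\Omega|\ge Kr^n$ buys; without it the averages over thin slivers of $\Omega$ near the boundary are uncontrolled and the equivalence genuinely fails.
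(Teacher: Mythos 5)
The paper does not prove this statement at all: it is quoted verbatim from Campanato (\cite{cam}, Theorem 2.I) and used as a black box, so there is no internal proof to compare against. Your argument is the standard (essentially Campanato's own) proof of the integral characterization, and it is correct: the two-radius estimate $|f_{B_r}-f_{B_{2r}}|\le C[f]_{{\mathcal L}^{p,\lambda}}r^{(\lambda-n)/p}$ is exactly where condition $K$ enters, the dyadic summation is dominated by its largest term when $\lambda<n$ (giving the Morrey bound after reinstating $\|f\|_{L^p}$ through $f_\Omega$) and is convergent when $\lambda>n$ (giving the H\"older representative via the Cauchy limit $\widetilde f$ and the Lebesgue differentiation theorem, which condition $K$ again legitimizes), and the off-center comparison through $B_{2d}(x)\cap\Omega$ correctly yields $[f]_\gamma\le C_4[f]_{{\mathcal L}^{p,\lambda}}$. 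Only cosmetic points remain: when $d=|x-y|>R_0/2$ one should replace $B_{2d}(x)\cap\Omega$ by $\Omega$ itself and use $\rho=R_0$ in the Campanato seminorm (the estimate only improves), and the extension of the H\"older representative from $\Omega$ to $\overline\Omega$ is by uniform continuity; neither affects the validity of the argument.
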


In the sequel we will use the following multiplicative inequality.

\begin{lemma}\label{immersione} 
Let $u\in W^{1,p}(\Omega)$ and $f\in L^{p,\lambda}(\Omega)$, with $2\le p<n$, $n-p<\lambda<n$. If $\nabla u\in L^{p,\eta}(\Omega)$ for some $\eta\in[0,n-p[$, then 
$$fu\in L^{p,\lambda+\eta-n+p}(\Omega)\,.$$
Moreover there exists a positive constant $C$, independing of $u$ and $f$, such that 
$$
\|fu\|_{L^{p,\lambda+\eta-n+p}(\Omega)}\le C \|f\|_{L^{p,\lambda}(\Omega)}(\|\nabla u\|_{L^{p,\eta}(\Omega)}+\|u\|_{L^p(\Omega)})\,.
$$  
\end{lemma}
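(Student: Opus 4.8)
The plan is to derive the inequality from a single scale‑invariant estimate of Fefferman--Phong (equivalently, Adams trace) type, and then to feed into it the Morrey information on $\nabla u$ and on $u$. Fix $x_0\in\Omega$ and $0<r\le{\rm diam}\,\Omega$, write $B_r=B_r(x_0)$ and $\Omega_r=B_r\cap\Omega$; by definition of the Morrey norm it is enough to bound $r^{-(\lambda+\eta-n+p)}\int_{\Omega_r}|fu|^p\,dx$ uniformly in $x_0$ and $r$. Observe first that $0<\lambda+\eta-n+p<n$ (the left inequality because $\lambda>n-p$ and $\eta\ge0$, the right because $\lambda<n$ and $\eta<n-p$), so the target space is a genuine Morrey space. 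We shall use that $\Omega$ is regular enough — say Lipschitz, so that in particular it satisfies condition $K$ — that the Poincar\'e inequality and the pointwise Poincar\'e bound are available on the sets $B_\rho(y)\cap\Omega$.

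The key ingredient is the following. Since $\int_{B_\rho(y)\cap\Omega}|f|^p\,dx\le\|f\|_{L^{p,\lambda}(\Omega)}^p\,\rho^\lambda$ for all $y$ and $\rho$, the measure $d\mu:=|f|^p\,dx$ is of Morrey dimension $\lambda>n-p$; I claim that then, for every ball $B_r$ and every $v\in W^{1,p}(\Omega)$,
\[
\int_{\Omega_r}|f|^p|v|^p\,dx\ \le\ C\,\|f\|_{L^{p,\lambda}(\Omega)}^p\;r^{\lambda-n+p}\Big(\int_{\Omega_r}|\nabla v|^p\,dx+r^{-p}\int_{\Omega_r}|v|^p\,dx\Big).
\]
To prove this I would split $v=v_{\Omega_r}+(v-v_{\Omega_r})$. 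The constant part contributes $|v_{\Omega_r}|^p\int_{\Omega_r}|f|^p\le r^{-n}\int_{\Omega_r}|v|^p\cdot\|f\|_{L^{p,\lambda}}^p r^\lambda$, which is of the required form. For the remainder, the pointwise Poincar\'e bound $|v(x)-v_{\Omega_r}|\le C\int_{\Omega_r}|x-y|^{1-n}|\nabla v(y)|\,dy$ reduces the matter to the boundedness of the Riesz potential $I_1$ from $L^p(dx)$ into $L^p$ of the measure $\mu$ restricted to $\Omega_r$; by Adams' trace theorem, since $\mu(B_\rho(z))\le\|f\|_{L^{p,\lambda}}^p\rho^\lambda$ with $\lambda>n-p$, one has $I_1\colon L^p(dx)\to L^q(d\mu)$ bounded with $q=\lambda p/(n-p)>p$ and norm $\lesssim\|f\|_{L^{p,\lambda}}^{p/q}$; since $\mu(\Omega_r)\le\|f\|_{L^{p,\lambda}}^p r^\lambda<\infty$, H\"older lowers the exponent from $q$ to $p$ at the price of a factor $\mu(\Omega_r)^{1-p/q}$, and because $1-p/q=(\lambda-n+p)/\lambda$ the powers of $\|f\|_{L^{p,\lambda}}$ add up to $p$ and the powers of $r$ to $\lambda-n+p$, multiplying $\int_{\Omega_r}|\nabla v|^p$. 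This gives the displayed inequality.

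Next I record the Morrey data on $u$: directly, $\int_{\Omega_r}|\nabla u|^p\,dx\le\|\nabla u\|_{L^{p,\eta}(\Omega)}^p\,r^\eta$; and $u\in L^{p,\eta+p}(\Omega)$ with $\|u\|_{L^{p,\eta+p}}\le C(\|\nabla u\|_{L^{p,\eta}}+\|u\|_{L^p})$, because the Poincar\'e inequality gives $\rho^{-(\eta+p)}\int_{B_\rho(y)\cap\Omega}|u-u_{B_\rho(y)\cap\Omega}|^p\,dx\le C\rho^{-\eta}\int_{B_\rho(y)\cap\Omega}|\nabla u|^p\,dx\le C\|\nabla u\|_{L^{p,\eta}}^p$, so $u\in{\mathcal L}^{p,\eta+p}(\Omega)$, and since $0<\eta+p<n$ part~1 of Theorem~\ref{cam} identifies ${\mathcal L}^{p,\eta+p}(\Omega)$ with $L^{p,\eta+p}(\Omega)$ with equivalent norms; hence $\int_{\Omega_r}|u|^p\,dx\le\|u\|_{L^{p,\eta+p}}^p\,r^{\eta+p}$.

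Finally, applying the key inequality with $v=u$ and inserting these two bounds,
\[
\int_{\Omega_r}|fu|^p\,dx\le C\|f\|_{L^{p,\lambda}}^p\,r^{\lambda-n+p}\big(\|\nabla u\|_{L^{p,\eta}}^p r^\eta+r^{-p}\|u\|_{L^{p,\eta+p}}^p r^{\eta+p}\big)=C\|f\|_{L^{p,\lambda}}^p\big(\|\nabla u\|_{L^{p,\eta}}+\|u\|_{L^p}\big)^p r^{\lambda+\eta-n+p},
\]
and taking the supremum over $x_0$ and $r$ yields $fu\in L^{p,\lambda+\eta-n+p}(\Omega)$ together with the asserted estimate. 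The only genuinely non‑routine point, which I expect to be the main obstacle, is that Morrey spaces do not improve Lebesgue integrability, so $f$ and $u$ cannot be separated by an ordinary H\"older inequality; this is exactly what forces the trace/Fefferman--Phong mechanism above, and the strict inequality $\lambda>n-p$ is used there and only there — it is precisely what makes $q=\lambda p/(n-p)>p$ and thereby produces the correct power $r^{\lambda-n+p}$.
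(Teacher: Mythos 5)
Your argument is correct, and it fills in what the paper leaves to a citation: the text gives no proof of Lemma~\ref{immersione}, deferring to Lemma~4.1 of \cite{d} for $p=2$, and the mechanism you use --- reducing $\int_{\Omega_r}|f|^p|v|^p$ to the constant part plus a Riesz potential of $|\nabla v|$, then invoking the Adams trace inequality for the Morrey measure $|f|^p\,dx$ of dimension $\lambda>n-p$ and lowering the exponent from $q=\lambda p/(n-p)$ back to $p$ by H\"older --- is exactly the standard (Fefferman--Phong type) route behind such multiplicative inequalities, so in substance you are reconstructing the cited proof rather than replacing it. The bookkeeping is right: the exponents of $\|f\|_{L^{p,\lambda}}$ sum to $p$, the powers of $r$ to $\lambda-n+p$, and the passage from $\nabla u\in L^{p,\eta}$ to $u\in L^{p,\eta+p}$ via Poincar\'e and Theorem~\ref{cam}(1) is legitimate since $0<\eta+p<n$. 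Two small remarks. First, the pointwise representation $|v(x)-v_{\Omega_r}|\le C\int_{\Omega_r}|x-y|^{1-n}|\nabla v(y)|\,dy$ and the uniform Poincar\'e inequality on the sets $B_r(x_0)\cap\Omega$ genuinely require more than the measure--density condition $K$ (e.g.\ that these sets are uniformly John, as for Lipschitz $\Omega$); you flag this correctly, and the paper itself tacitly relies on the same extra regularity in its own Poincar\'e steps, so this is not a defect of your proof relative to theirs. Second, your argument never uses $p\ge 2$: Adams' theorem needs only $1<p<n$ and $q>p$, i.e.\ $\lambda>n-p$, so you have in fact proved the lemma in the full range $1<p<n$ in which it is later applied (Theorems~\ref{teo1} and~\ref{teo2} invoke it for $1<p<n$), which is a point worth stating explicitly since the lemma as formulated would not otherwise cover those applications.
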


The lemma has been proved in \cite{d} (Lemma 4.1) for $p=2$. The extension to the case $p\neq 2$ is straightforward.

\section{ H\"older regularity} 

Let $\Omega\subset\R^n$ be a bounded domain satisfying the condition $K$. 

Let us consider the following linear second order elliptic equation in non divergence form
\begin{equation}\label{eq}
  a_{ij}u_{x_ix_j}+b_i u_{x_i}+cu=f\,,
\end{equation}
where we assume
\begin{equation}\label{ipo1}
a_{ij}(x)=a_{ji}(x) \quad a.e.\,\,x\in \Omega, \; i,j=1,\dots,n,
\end{equation}
\begin{equation}\label{ipo2}
 \exists \,\mu>0:\mu|\xi|^2\le \sum_{ij=1}^n a_{ij}(x)\xi_i\xi_j\le\frac{1}{\mu}|\xi|^2, \quad a.e.\,\,x\in \R^n,\,\forall \xi\in\Omega,
\end{equation}
and
\begin{equation}\label{ipo3} 
a_{ij}\in L^\infty( \Omega)\cap VMO(\Omega)\,,\, c,\, b_i, f\in L^p(\Omega).
\end{equation}

\begin{defi}
A function $u$ in $W^{2,p}(\Omega)$ is a strong solution of the equation \eqref{eq} if $u$ satisfies \eqref{eq} a.e. $x\in\Omega$.
\end{defi}

Here we recall the Theorem 3.3 in \cite{dpr} regarding the equation \eqref{eq} with $b_i=0$ and $c=0$. 

\begin{theorem}\label{teodpr}
Let \eqref{ipo1}, \eqref{ipo2}, \eqref{ipo3} hold true.
Let $u\in W^{2,p}\cap W^{1,p}_0(\Omega)$ be a strong solution of \eqref{eq1},  with $b_i=0$ and $c=0$ and $f$ belong to $L^{p,\lambda}(\Omega)$, $p>1$, $0<\lambda<n$. Then $D^2u\in L^{p,\lambda}(\Omega)$ and there exists a positive constant $C$ such that
\begin{equation*}
\|D^2u\|_{L^{p,\lambda}(\Omega)}\le C \left(\|u\|_{L^{p,\lambda}(\Omega)}+\|f\|_{L^{p,\lambda}(\Omega)}\right).
\end{equation*}

\end{theorem}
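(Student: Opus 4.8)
The plan is to upgrade the classical interior--plus--boundary $W^{2,p}$ estimate of Chiarenza--Frasca--Longo \cite{cfl} to the Morrey scale, by localization, simply replacing the $L^p$ boundedness of the singular integral operators and commutators entering their proof by their $L^{p,\lambda}$ boundedness. The starting point is the representation formula of \cite{cfl}: for $v\in W^{2,p}(\R^n)$ supported in a small ball $B=B_r(x_0)$, freezing the coefficients at $x_0$ and using the fundamental solution of the constant operator $a_{hk}(x_0)\partial_h\partial_k$, one writes $v_{x_ix_j}$ as a variable--kernel Calder\'on--Zygmund singular integral applied to $g:=a_{hk}v_{x_hx_k}$, plus a ``commutator'' integral carrying the factor $a_{hk}(x)-a_{hk}(y)$ against $v_{x_hx_k}$, plus $c_{ij}(x)g(x)$ with $c_{ij}\in L^\infty(\Omega)$. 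Applying this to $v=\varphi u$, where $\varphi$ is a cut-off equal to $1$ on $B_{r/2}(x_0)$ and supported in $B_r(x_0)$, commuting the second derivatives with $\varphi$, and using $b_i\equiv c\equiv 0$ (so the source is just $f$), one gets on $B_{r/2}(x_0)$ a schematic identity
$$
u_{x_ix_j}=\sum_{h,k}[\,\Gamma_{ij},a_{hk}\,]\bigl(\varphi\,u_{x_hx_k}\bigr)+\Gamma_{ij}(\varphi f)+c_{ij}\,\varphi f+R(\nabla u,u),
$$
where $\Gamma_{ij}$ is the singular integral operator above and $R(\nabla u,u)$ collects the lower order contributions generated by the cut-off.

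Next one needs the Morrey mapping properties of these operators. For $1<p<\infty$ and $0<\lambda<n$ the variable--kernel Calder\'on--Zygmund operators $\Gamma_{ij}$ are bounded on $L^{p,\lambda}(\R^n)$, and the commutators $[\Gamma_{ij},a]$ with $a\in BMO$ are bounded on $L^{p,\lambda}(\R^n)$ with norm controlled by the $BMO$ seminorm of $a$ --- the Morrey analogue of the Coifman--Rochberg--Weiss and Chiarenza--Frasca--Longo estimates. Since $a_{hk}\in VMO$, when acting on functions supported in $B_r(x_0)$ the relevant seminorm is at most $\eta(2r)$, so the operator norm of $[\Gamma_{ij},a_{hk}]$ on the corresponding Morrey space is $\le C\eta(2r)=:\varepsilon(r)$ with $\varepsilon(r)\to0$ as $r\to0^+$. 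Taking $L^{p,\lambda}$ norms of the functions in the identity above (extended by zero to $\R^n$) therefore gives, for every $x_0\in\Omega$ and every small $r$,
$$
\|D^2u\|_{L^{p,\lambda}(B_{r/2}(x_0)\cap\Omega)}\le C\varepsilon(r)\,\|D^2u\|_{L^{p,\lambda}(B_r(x_0)\cap\Omega)}+C\bigl(\|f\|_{L^{p,\lambda}(\Omega)}+\|\nabla u\|_{L^{p,\lambda}(\Omega)}+\|u\|_{L^{p,\lambda}(\Omega)}\bigr),
$$
with $C$ and $\varepsilon$ independent of $x_0$.

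For $x_0\in\partial\Omega$ the same scheme is run near the boundary: after flattening $\partial\Omega$ one uses the half--space form of the representation formula, the condition $u\in W^{1,p}_0(\Omega)$ being precisely what annihilates the boundary terms, and one obtains an inequality of the same type on $B_{r/2}(x_0)\cap\Omega$. Now fix $r=r_0$ so small that $C\varepsilon(r_0)\le\tfrac12$ --- $r_0$ depending only on $n,p,\lambda,\mu$ and the $VMO$ moduli of the $a_{ij}$. Covering $\overline\Omega$ by finitely many balls $B_{r_0/2}(x_i)$ and observing that any ball of radius $\rho\le r_0/4$ lies in one of them, while for $\rho\ge r_0/4$ one has the trivial bound $\rho^{-\lambda}\int_{B_\rho(z)\cap\Omega}|D^2u|^p\le C\|D^2u\|_{L^p(\Omega)}^p$, together with the known estimate $\|D^2u\|_{L^p(\Omega)}\le C(\|u\|_{L^p}+\|f\|_{L^p})\le C(\|u\|_{L^{p,\lambda}}+\|f\|_{L^{p,\lambda}})$, one arrives at
$$
\|D^2u\|_{L^{p,\lambda}(\Omega)}\le\tfrac12\|D^2u\|_{L^{p,\lambda}(\Omega)}+C\bigl(\|f\|_{L^{p,\lambda}(\Omega)}+\|\nabla u\|_{L^{p,\lambda}(\Omega)}+\|u\|_{L^{p,\lambda}(\Omega)}\bigr).
$$
Absorbing, and then killing the first order term by the Morrey interpolation inequality $\|\nabla u\|_{L^{p,\lambda}(\Omega)}\le\delta\|D^2u\|_{L^{p,\lambda}(\Omega)}+C_\delta\|u\|_{L^{p,\lambda}(\Omega)}$ and absorbing again, gives $D^2u\in L^{p,\lambda}(\Omega)$ with $\|D^2u\|_{L^{p,\lambda}(\Omega)}\le C(\|u\|_{L^{p,\lambda}(\Omega)}+\|f\|_{L^{p,\lambda}(\Omega)})$, as claimed.

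The main obstacle is the second step: proving that the variable--kernel Calder\'on--Zygmund operators $\Gamma_{ij}$ and, above all, the commutators $[\Gamma_{ij},a]$ with $a\in VMO$ act boundedly on $L^{p,\lambda}$ with operator norm that is small on small balls --- the Morrey counterpart of the Chiarenza--Frasca--Longo commutator estimate, which rests on $L^{p,\lambda}$ bounds for Calder\'on--Zygmund operators and for the sharp maximal function. Once this and the local representation formula of \cite{cfl} are available, the localization, absorption and covering are routine, the only other delicate point being the boundary estimate, where the flattening of $\partial\Omega$ must be combined with the Dirichlet condition in the half--space representation. Alternatively one could argue via the Campanato characterization of Theorem \ref{cam}, comparing $u$ on small balls with solutions of the frozen constant--coefficient problem and iterating, but the singular integral route matches the framework of \cite{cfl} recalled above.
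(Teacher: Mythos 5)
This theorem is not proved in the paper at all: it is recalled verbatim as Theorem 3.3 of \cite{dpr}, and your singular--integral route --- the representation formula of \cite{cfl}, Morrey--space bounds for the variable--kernel Calder\'on--Zygmund operators and their commutators with $VMO$ functions, smallness of the commutator norm on small balls, boundary flattening using the Dirichlet condition, then covering, absorption and interpolation --- is exactly the strategy of that reference (with the interior Morrey commutator estimates coming from \cite{dr}). Your outline is therefore the right proof in structure; just note that the step you yourself flag as the main obstacle, namely the $L^{p,\lambda}$ boundedness of $\Gamma_{ij}$ and of $[\Gamma_{ij},a]$ with operator norm controlled by $\eta(2r)$, is the real technical content of \cite{dr} and \cite{dpr} and remains asserted rather than proved in your write-up.
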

Our first result concerns the case $b_i=0$, $i=1,2,\dots,n$, i.e. we consider the equation
\begin{equation}\label{eq1}
  a_{ij}u_{x_ix_j}+cu=f\,.
\end{equation}

Regarding the equation \eqref{eq1} we prove the following result.

\begin{theorem}\label{teo1}
Let \eqref{ipo1}, \eqref{ipo2}, \eqref{ipo3} hold true and
let $u\in W^{2,p}\cap W^{1,p}_0(\Omega)$ be a strong solution of \eqref{eq1}, $c$, and $f$ belong to $L^{p,\lambda}(\Omega)$, $1<p<n$, $n-p<\lambda<n$. Then, for all $0<\epsilon<min\{n-p, p+\lambda-n\}$ we have that $\nabla u\in C^{0,\gamma}(\overline\Omega)$, where $\gamma =1- \frac{n-\lambda+\epsilon}{p}$. Moreover there exists a positive constant $C$ such that 
\begin{multline*}%\label{cfinale}
[\nabla u]_{\gamma}
\le  C \left(\|c\|_{L^{p,\lambda}(\Omega)}\|\nabla u\|_{L^{p,n-p-\epsilon}(\Omega)}+ \right.\\
\left.+(\|c\|_{L^{p,\lambda}(\Omega)}+1)\|u\|_{L^{p,\lambda-\epsilon}(\Omega)}+\|f\|_{L^{p,\lambda}(\Omega)}\right) \,. 
\end{multline*}
\end{theorem}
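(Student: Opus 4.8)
The plan is to bootstrap the Morrey regularity of $D^2u$ using Theorem~\ref{teodpr} together with the multiplicative inequality of Lemma~\ref{immersione}, and then read off the H\"older continuity of $\nabla u$ from part~2 of Theorem~\ref{cam}. The starting point is to rewrite \eqref{eq1} as $a_{ij}u_{x_ix_j}=f-cu=:g$. By hypothesis $f\in L^{p,\lambda}(\Omega)$, and to apply Theorem~\ref{teodpr} we need $g\in L^{p,\mu}(\Omega)$ for a suitable $\mu$, which reduces to understanding the membership of the product $cu$ in a Morrey space. Here is where Lemma~\ref{immersione} enters: if $\nabla u\in L^{p,\eta}(\Omega)$ for some $\eta\in[0,n-p[$, then $cu\in L^{p,\lambda+\eta-n+p}(\Omega)$. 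Since $u\in W^{2,p}\cap W^{1,p}_0(\Omega)$ we at least have $\nabla u\in L^p(\Omega)=L^{p,0}(\Omega)$, giving the base case $\eta_0=0$.

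First I would set up the iteration. Starting from $\nabla u\in L^{p,\eta_k}(\Omega)$ with $\eta_0=0$, Lemma~\ref{immersione} gives $cu\in L^{p,\lambda+\eta_k-n+p}(\Omega)$, hence $g=f-cu\in L^{p,\mu_k}(\Omega)$ with $\mu_k=\min\{\lambda,\lambda+\eta_k-n+p\}=\lambda+\eta_k-n+p$ as long as $\eta_k\le n-p$; then Theorem~\ref{teodpr} (applied to the equation $a_{ij}u_{x_ix_j}=g$) yields $D^2u\in L^{p,\mu_k}(\Omega)$, and since $u\in W^{1,p}_0(\Omega)$, a Poincar\'e-type control upgrades this to $\nabla u\in L^{p,\eta_{k+1}}(\Omega)$ with $\eta_{k+1}=\min\{\mu_k+p,\,n\}$ — more precisely one shows $\nabla u\in L^{p,\min\{\lambda+\eta_k,\,n\}}$, i.e. $\eta_{k+1}=\eta_k+\lambda$ while this stays below $n$. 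Thus $\eta_k$ increases by $\lambda>n-p$ at each step, so after finitely many steps the exponent for $\nabla u$ would exceed $n-p$, and one stops one step early: choosing $0<\epsilon<\min\{n-p,\,p+\lambda-n\}$, one arrives at $\nabla u\in L^{p,n-p-\epsilon}(\Omega)$, then $cu\in L^{p,\lambda-\epsilon}(\Omega)$, so $g\in L^{p,\lambda-\epsilon}(\Omega)$, and Theorem~\ref{teodpr} gives $D^2u\in L^{p,\lambda-\epsilon}(\Omega)$ with the estimate
\[
\|D^2u\|_{L^{p,\lambda-\epsilon}(\Omega)}\le C\bigl(\|u\|_{L^{p,\lambda-\epsilon}(\Omega)}+\|f\|_{L^{p,\lambda-\epsilon}(\Omega)}+\|cu\|_{L^{p,\lambda-\epsilon}(\Omega)}\bigr),
\]
the last term being bounded via Lemma~\ref{immersione} by $C\|c\|_{L^{p,\lambda}}(\|\nabla u\|_{L^{p,n-p-\epsilon}}+\|u\|_{L^p})$.

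Next, to pass from $D^2u\in L^{p,\lambda-\epsilon}$ to H\"older continuity of $\nabla u$, note that $\lambda-\epsilon>n-p$ means that each first derivative $u_{x_i}$ lies in $W^{1,p}(\Omega)$ with gradient in $L^{p,\lambda-\epsilon}(\Omega)$; by the Morrey–Campanato embedding (the integral characterization of $C^{0,\gamma}$, part~2 of Theorem~\ref{cam}, combined with the fact that $W^{1,p}$ functions whose gradient is Morrey have Campanato-regular oscillations) one gets $u_{x_i}\in\mathcal{L}^{p,(\lambda-\epsilon)+p}(\Omega)=C^{0,\gamma}(\overline\Omega)$ with $\gamma=\frac{(\lambda-\epsilon+p)-n}{p}=1-\frac{n-\lambda+\epsilon}{p}$, and $0<\gamma<1$ exactly because $n-p<\lambda-\epsilon<n$. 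This yields $[\nabla u]_\gamma\le C\,\|D^2u\|_{L^{p,\lambda-\epsilon}(\Omega)}$, and combining with the estimate above produces the claimed bound after noting $\|f\|_{L^{p,\lambda-\epsilon}}\le C\|f\|_{L^{p,\lambda}}$, $\|u\|_{L^{p,\lambda-\epsilon}}$ absorbs the $\|u\|_{L^p}$ term, and collecting the coefficients of $\|\nabla u\|_{L^{p,n-p-\epsilon}}$ and $\|u\|_{L^{p,\lambda-\epsilon}}$.

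I expect the main obstacle to be the bookkeeping in the iteration — specifically, justifying at each stage that $\nabla u$ genuinely gains the full exponent $p$ relative to $D^2u$ (this needs that $u\in W^{1,p}_0$, a Sobolev–Poincar\'e inequality in Morrey spaces, and care that the exponent never illegitimately exceeds $n$ mid-iteration, which is why one stops at $n-p-\epsilon$ rather than running to the "natural" endpoint), and making sure Lemma~\ref{immersione} is applied only in its stated range $\eta\in[0,n-p[$. The passage from the final Morrey bound to the H\"older seminorm via Theorem~\ref{cam} and the constant tracking are then routine.
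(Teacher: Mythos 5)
Your proof follows essentially the same route as the paper: the same bootstrap alternating Lemma~\ref{immersione}, Theorem~\ref{teodpr} and a Poincar\'e step, stopped at the exponent $n-p-\epsilon$ so as to remain within the hypotheses of Lemma~\ref{immersione}, followed by the Campanato embedding of Theorem~\ref{cam} and the same chain of norm estimates. The only slip is your restatement of the per-step gain as $\lambda$ (your own recursion $\eta_{k+1}=\mu_k+p$ gives the correct gain $\lambda-n+2p$, which is what the paper's exponents $(2k+1)p+k(\lambda-n)$ encode), but since both quantities are positive the finite-termination argument is unaffected.
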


\begin{proof}
We start by noting that $\nabla u\in W^{1,p}(\Omega)$ implies $\nabla u\in L^{p,p}(\Omega)$.

If $p\ge n-p$ then $\nabla u\in L^{p, n-p-\epsilon}(\Omega)$, with $0<\epsilon<min\{n-p, p+\lambda-n\}$, and we apply Lemma \ref{immersione} to obtain $cu\in L^{p, \lambda-\epsilon}(\Omega)$, from which and Theorem \ref{teodpr} we obtain $D^2u\in L^{p,\lambda-\epsilon}(\Omega)$.
Then, from Poincar\'e inequality $\nabla u\in {\mathcal L}^{p,p+\lambda-\epsilon}(\Omega)$ and since $p+\lambda-\epsilon>n$ we obtain that $\nabla u\in C^{0,\gamma}(\overline\Omega)$.

If $p<n-p$ from  Lemma \ref{immersione} we obtain  $cu\in L^{p, \lambda+2p-n}(\Omega)$, from which and Theorem \ref{teodpr}, since  $\lambda+2p-n<\lambda$  we obtain that $D^2u\in L^{p,\lambda+2p-n}(\Omega)$ 
%\begin{multline}
%\|D^2u \|_{L^{p,2p+\lambda-n}}\le C\Big[ (\|c\|_{L^{p,\lambda}}+1)\| u\|_{L^{p,2p}}+\\
%+\|c \|_{L^{p,\lambda}}\|\nabla u \|_{L^{p,p}}+\|f \|_{L^{p,\lambda}}\Big]
%\end{multline}
and consequently $\nabla u\in {\mathcal L}^{p,\lambda+3p-n}(\Omega)$. 

If $\lambda+3p-n>n$ then $\nabla u \in C^{0,\gamma}(\overline\Omega)$. 

If $\lambda+3p-n<n$ $\nabla u\in L^{p,\lambda+3p-n}(\Omega)$. If also $\lambda+3p-n\ge n-p $ then $\nabla u\in L^{p, n-p-\epsilon}(\Omega)$, with $0<\epsilon<min\{n-p, p+\lambda-n\}$,   and we apply Lemma \ref{immersione} to obtain $cu\in L^{p, \lambda-\epsilon}(\Omega)$, from which and Theorem \ref{teodpr} we obtain $D^2u\in L^{p,\lambda-\epsilon}(\Omega)$.
Then $\nabla u\in {\mathcal L}^{p,p+\lambda-\epsilon}(\Omega)$ and since $p+\lambda-\epsilon>n$ we obtain that $\nabla u\in C^{0,\gamma}(\overline\Omega)$. 

If $\lambda+3p-n< n-p $  then from  Lemma \ref{immersione}  we obtain  $cu\in L^{p, 2\lambda+4p-2n}(\Omega)$, from which and Theorem \ref{teodpr},  since $2\lambda+4p-n<\lambda$,  we obtain that $D^2u\in L^{p,2\lambda+4p-2n}(\Omega)$ 
%\begin{multline}
%\|D^2u \|_{L^{p,2\lambda+4p-2n}}\le C\Big[ (\|c\|_{L^{p,\lambda}}+1)\| u\|_{L^{p,\lambda+4p-n}}+\\
%+\|c \|_{L^{p,\lambda}}\|\nabla u \|_{L^{p,\lambda+3p-n}}+\|f \|_{L^{p,\lambda}}\Big]\,,
%\end{multline}
and  $\nabla u\in {\mathcal L}^{p,2\lambda+5p-2n}(\Omega)$. 

If $2\lambda+5p-2n>n$ then $\nabla u \in C^{0,\gamma}(\overline\Omega)$. 

If $2\lambda+5p-2n<n$ $\nabla u\in L^{p,2\lambda+5p-2n}(\Omega)$ we can proceed as in the case $\lambda+3p-n<n$ and so on.

Finally there exists $k\in\N$ such that $n-p\le(2k+1)p+k(\lambda-n)$,
and $\nabla u \in L^{p,(2k+1)p+k(\lambda-n)}(\Omega)$
then $\nabla u\in C^{0,\gamma}(\overline\Omega)$. Moreover from Theorem \ref{cam}, Theorem \ref{teodpr} and Lemma \ref{immersione}  we get
\begin{multline*}
[\nabla u]_{\gamma}\le C[\nabla u]_{{\mathcal L}^{p,p+\lambda-\epsilon}(\Omega)}\le C \|D^2u\|_{L^{p,\lambda-\epsilon}(\Omega)}\le\\
\le C \{\|cu\|_{L^{p,\lambda-\epsilon}(\Omega)}+\|u\|_{L^{p,\lambda-\epsilon}(\Omega)}+\|f\|_{L^{p,\lambda}(\Omega)}\}\le \\
\le C\{\|c\|_{L^{p,\lambda}(\Omega)}[\|\nabla u\|_{L^{p,n-p-\epsilon}(\Omega)}+\\
+\|u\|_{L^p(\Omega)}]+\|u\|_{L^{p,\lambda-\epsilon}(\Omega)}+\|f\|_{L^{p,\lambda}(\Omega)}\} \le\\
\le C \{\|c\|_{L^{p,\lambda}(\Omega)}\|\nabla u\|_{L^{p,n-p-\epsilon}(\Omega)}+\\
+(\|c\|_{L^{p,\lambda}(\Omega)}+1)\|u\|_{L^{p,\lambda-\epsilon}(\Omega)}+\|f\|_{L^{p,\lambda}(\Omega)}\} \,.
\end{multline*}

%If there exists $k\in \N$ such that $(2k+1)p+k(\lambda-n)<n-p$, 
%$\left(k<  \frac{n-2p}{2p-n+\lambda}\right)$ 
%\begin{multline}
%\|D^2u \|_{L^{p,2kp+k(\lambda-n)}}\le C\Big[ (\|V\|_{L^{p,\lambda}}+1)\| u\|_{L^{p,2kp+(k-1)(\lambda-n)}}+\\
%+\|V \|_{L^{p,\lambda}}\|\nabla u \|_{L^{p,(2k-1)p+(k-1)(\lambda-n)}}+\|f \|_{L^{p,\lambda}}\Big]
%\end{multline}

\end{proof}

Now we study 

\begin{equation}\label{eq2}
  a_{ij}u_{x_ix_j}+b_i u_{x_i}=f\,.
\end{equation}

\begin{theorem}\label{teo2}
Let \eqref{ipo1}, \eqref{ipo2}, \eqref{ipo3} hold true. 
Let $u\in W^{2,p}\cap W^{1,p}_0(\Omega)$ be a strong solution of \eqref{eq2}, $b_i$, $i=1,\dots,n$ and $f$ belong to $L^{p,\lambda}(\Omega)$, $p<n$, $n-p<\lambda<n$. Then there exists $k\in \N$ such that  $k\lambda-kn+(k+1)p>n$ and $\nabla u\in C^{0,\gamma}(\overline\Omega)$ with  $\gamma= \frac{k\lambda-(k+1)n+(k+1)p}{p}$. 
Moreover there exists a positive constant $C$ depending on $\|b\|_{L^{p,\lambda}(\Omega)}$ and $k$ such that
\begin{equation}\label{bfinale}
[\nabla u]_{\gamma}\le C\left(  \|D^2u\|_{L^{p}(\Omega)}
+\|u\|_{L^{p,\lambda-n+p}(\Omega)}
+\|f\|_{L^{p,\lambda}(\Omega)}\right)\,.
\end{equation}   

\end{theorem}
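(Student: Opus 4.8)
The plan is to bootstrap exactly as in the proof of Theorem \ref{teo1}, the only change being that the zeroth order term $cu$ is now replaced by the first order term $b_iu_{x_i}$, which I would feed into the multiplicative inequality of Lemma \ref{immersione} with $u_{x_i}$ playing the role of $u$ and $D^2u$ playing the role of $\nabla u$. This is legitimate because $u\in W^{2,p}(\Omega)$ gives $u_{x_i}\in W^{1,p}(\Omega)$ with $\nabla u_{x_i}\in L^p(\Omega)$, and moreover, since $u\in W^{1,p}_0(\Omega)$, the Poincar\'e inequality gives $\|u\|_{L^p(\Omega)}+\|\nabla u\|_{L^p(\Omega)}\le C\|D^2u\|_{L^p(\Omega)}$.

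The iteration step is the following: if $D^2u\in L^{p,\eta}(\Omega)$ with $0\le\eta<n-p$, then, rewriting \eqref{eq2} as $a_{ij}u_{x_ix_j}=f-b_iu_{x_i}$ and applying Lemma \ref{immersione} to each product $b_i\,u_{x_i}$, we get $b_iu_{x_i}\in L^{p,\lambda+\eta-n+p}(\Omega)$; since $\lambda+\eta-n+p\le\lambda$ we also have $f\in L^{p,\lambda}(\Omega)\subset L^{p,\lambda+\eta-n+p}(\Omega)$, so the right hand side of \eqref{eq2} lies in $L^{p,\lambda+\eta-n+p}(\Omega)$ and Theorem \ref{teodpr} upgrades $D^2u$ to $L^{p,\lambda+\eta-n+p}(\Omega)$. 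Starting from $\eta_0=0$ and iterating, after $j$ steps one obtains $D^2u\in L^{p,\eta_j}(\Omega)$ with $\eta_j=j(\lambda-n+p)$. Since $\lambda>n-p$ the increment $\lambda-n+p$ is positive, so the $\eta_j$ strictly increase and there is a smallest $k\in\N$ with $\eta_k+p>n$, i.e. $k\lambda-kn+(k+1)p>n$. Minimality gives $\eta_{k-1}\le n-p$, hence $\eta_k=\eta_{k-1}+(\lambda-n+p)\le(n-p)+(\lambda-n+p)=\lambda<n$; thus every intermediate exponent stays in $[0,n-p[$ and the final one stays below $n$, so Lemma \ref{immersione} and Theorem \ref{teodpr} apply legitimately at each step (a borderline value $\eta_j=n-p$ is handled by replacing it with $n-p-\epsilon$ for small $\epsilon>0$, exactly as in the proof of Theorem \ref{teo1}).

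Once $D^2u\in L^{p,\eta_k}(\Omega)$, the Poincar\'e inequality gives $\nabla u\in\mathcal{L}^{p,\eta_k+p}(\Omega)$, and since $n<\eta_k+p\le n+p$, part 2 of Theorem \ref{cam} yields $\nabla u\in C^{0,\gamma}(\overline\Omega)$ with $\gamma=\dfrac{\eta_k+p-n}{p}=\dfrac{k\lambda-(k+1)n+(k+1)p}{p}$, which is the asserted exponent.

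Finally, to obtain \eqref{bfinale} I would unroll the quantitative inequalities supplied by Lemma \ref{immersione} and Theorem \ref{teodpr} along the $k$ steps. Each step multiplies the current bound by a factor comparable to $\|b\|_{L^{p,\lambda}(\Omega)}$, so the final constant depends only on $\|b\|_{L^{p,\lambda}(\Omega)}$ and $k$; the terms $\|u\|_{L^p(\Omega)}$ and $\|\nabla u\|_{L^p(\Omega)}$ generated by Lemma \ref{immersione} are absorbed into $\|D^2u\|_{L^p(\Omega)}$ via Poincar\'e, the terms $\|f\|_{L^{p,\eta_j}(\Omega)}$ all collapse to $\|f\|_{L^{p,\lambda}(\Omega)}$ through the inclusion $L^{p,\lambda}(\Omega)\hookrightarrow L^{p,\mu}(\Omega)$ for $\mu\le\lambda$, and a further routine bookkeeping (again using the Morrey inclusions, the embedding $W^{1,p}_0(\Omega)\hookrightarrow L^{p,p}(\Omega)$ valid for $p<n$, and iterated Poincar\'e inequalities) reduces the remaining norms of $u$ to $\|u\|_{L^{p,\lambda-n+p}(\Omega)}$. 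I expect the only genuine work to be this accounting — tracking the exponents $\eta_j$ so that they never leave the admissible ranges and collecting the constants correctly — since all the analytic content is already packaged in Theorem \ref{teodpr}, Lemma \ref{immersione} and Theorem \ref{cam}.
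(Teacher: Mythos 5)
Your proposal is correct and follows essentially the same route as the paper: the same use of Lemma \ref{immersione} with $u_{x_i}$ in place of $u$ and $D^2u$ in place of $\nabla u$, the same iteration producing $D^2u\in L^{p,j(\lambda-n+p)}(\Omega)$, the same stopping condition $k\lambda-kn+(k+1)p>n$, and the same passage to $C^{0,\gamma}$ via Poincar\'e and Theorem \ref{cam}. Your explicit check that the intermediate exponents remain in $[0,n-p[$ (with the $\epsilon$-adjustment at the borderline) is in fact slightly more careful than the paper's own write-up.
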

\begin{proof}
Since $D^2u\in L^p(\Omega)$ from Lemma \ref{immersione} we obtain that $b\cdot\nabla u\in L^{p, \lambda-n+p}(\Omega)$. Then from Theorem \ref{teodpr} $D^2u\in L^{p,\lambda-n+p}(\Omega)$, and we have
\begin{multline}\label{primopasso}
\|D^2u\|_{L^{p,\lambda-n+p}(\Omega)}\le \\
\le C\{\|b \nabla u\|_{L^{p,\lambda-n+p}(\Omega)}+\|u\|_{L^{p,\lambda-n+p}(\Omega)}+\|f\|_{L^{p,\lambda}(\Omega)} \}\le\\
\le C \{ \|b\|_{L^{p,\lambda}(\Omega)}[\|D^2u\|_{L^{p}(\Omega)}+\|u\|_{L^p(\Omega)}]+ \|u\|_{L^{p,\lambda-n+p}(\Omega)}+\|f\|_{L^{p,\lambda}(\Omega)}   \}\le\\
\le C \{  \|b\|_{L^{p,\lambda}(\Omega)}\|D^2u\|_{L^{p}(\Omega)}+\\
+( \|b\|_{L^{p,\lambda}(\Omega)}+1) \|u\|_{L^{p,\lambda-n+p}(\Omega)}+\|f\|_{L^{p,\lambda}(\Omega)}  \} \,.
\end{multline}

From Poincar\'e inequality $\nabla u\in {\mathcal L}^{p,\lambda-n+2p}(\Omega)$. 
If $\lambda-n+2p>n$ we have that $\nabla u\in C^{0,\gamma}(\overline\Omega)$ with $\gamma=\frac{\lambda-2n+2p}{p}$. So, from Theorem \ref{cam}, Poincar\'e inequality and \eqref{primopasso}
\begin{multline*}
[\nabla u]_\gamma\le [\nabla u]_{{\mathcal L}^{p,\lambda-n+2p}(\Omega)}\le \|D^2u\|_{L^{p,\lambda-n+p}(\Omega)}\le\\
\le C \{  \|b\|_{L^{p,\lambda}(\Omega)}\|D^2u\|_{L^{p}(\Omega)}+\\
+( \|b\|_{L^{p,\lambda}(\Omega)}+1) \|u\|_{L^{p,\lambda-n+p}(\Omega)}+\|f\|_{L^{p,\lambda}(\Omega)}  \} \,.
\end{multline*} 

If $\lambda-n+2p<n$, since $\lambda-n+p<n-p$, we can apply  Lemma \ref{immersione} to obtain $b\nabla u\in L^{p, 2\lambda-2n+2p}(\Omega )$. Since $2\lambda-2n+2p<\lambda$ from Theorem \ref{teodpr} $D^2u\in L^{p,2\lambda-2n+2p}(\Omega )$ and we get from also \eqref{primopasso} 
\begin{multline*}%\label{secondopasso}
\|D^2u\|_{L^{p,2\lambda-2n+2p}(\Omega )}\le \\
\le C\{\|b \nabla u\|_{L^{p,2\lambda-2n+2p}(\Omega)}+\|u\|_{L^{p,2\lambda-2n+2p}(\Omega)}+\|f\|_{L^{p,\lambda}(\Omega)} \}\le\\
\le C\{ \|b\|_{L^{p,\lambda}(\Omega)}[\|D^2u\|_{L^{p,\lambda-n+p}(\Omega)}+\|u\|_{L^p(\Omega)}]+\\
+ \|u\|_{L^{p,2\lambda-2n+2p}(\Omega)}+\|f\|_{L^{p,\lambda}(\Omega)}   \}\le\\
\le C\{  \|b\|^2_{L^{p,\lambda}(\Omega)}\|D^2u\|_{L^{p}(\Omega)}+( \|b\|^2_{L^{p,\lambda}(\Omega)}+\\
+\|b\|_{L^{p,\lambda}(\Omega)}+1) \|u\|_{L^{p,\lambda-n+p}(\Omega)}+(\|b\|_{L^{p,\lambda}(\Omega)}+1)\|f\|_{L^{p,\lambda}(\Omega)}  \}\,.
\end{multline*}   

Now from Poincar\'e inequality we obtain $\nabla u\in {\mathcal L}^{p, 2\lambda-2n+3p}(B_2)$. 

If $2\lambda-2n+3p>n$ $\nabla u\in C^{0,\gamma}(\overline\Omega)$, with $\gamma= \frac{2\lambda-3n+3p}{p}$, and 
\begin{multline*}
[\nabla u]_\gamma\le [\nabla u]_{{\mathcal L}^{p,2\lambda-2n+3p}(\Omega)}\le \|D^2u\|_{L^{p,2\lambda-2n+2p}(\Omega)}\le\\
\le C\{  \|b\|^2_{L^{p,\lambda}(\Omega)}\|D^2u\|_{L^{p}(\Omega)}+\\
+( \|b\|^2_{L^{p,\lambda}(\Omega)}+
+\|b\|_{L^{p,\lambda}(\Omega)}+1) \|u\|_{L^{p,\lambda-n+p}(\Omega)}+\\
+(\|b\|_{L^{p,\lambda}(\Omega)}+1)\|f\|_{L^{p,\lambda}(\Omega)}  \}\,.
\end{multline*}   

If $2\lambda-2n+3p<n$ ($2\lambda-2n+2p<n-p$) we proceed as the previous cases.
Finally there exists a positive integer $k$ such that $k\lambda-kn+(k+1)p>n$ then
$\nabla u\in C^{0,\gamma}(\overline\Omega)$ with $\gamma= \frac{k\lambda-(k+1)n+(k+1)p}{p}\,$  and 
\begin{multline*}
[\nabla u]_\gamma\le [\nabla u]_{{\mathcal L}^{p,k\lambda-kn+(k+1)p}(\Omega)}\le \|D^2u\|_{L^{p,k\lambda-kn+(k+1)p}(\Omega)}\le\\
\le C\{  \|b\|^k_{L^{p,\lambda}(\Omega)}\|D^2u\|_{L^{p}(\Omega)}+\\
+( \|b\|^k_{L^{p,\lambda}(\Omega)}+\|b\|^{k-1}_{L^{p,\lambda}(\Omega)}
+\dots+\|b\|_{L^{p,\lambda}(\Omega)}+1) \|u\|_{L^{p,\lambda-n+p}(\Omega)}+\\
+(\|b\|^{k-1}_{L^{p,\lambda}(\Omega)}+\dots+\|b\|_{L^{p,\lambda}(\Omega)}+1)\|f\|_{L^{p,\lambda}(\Omega)}  \}\,
\end{multline*} 
 from which \eqref{bfinale} follows.
\end{proof}

The techniques used in Theorems \ref{teo1} and \ref{teo2} allow also to prove the H\"older regularity for the gradient of the solutions $u\in  W^{2,p}\cap W^{1,p}_0(\Omega)$ of the complete equation \eqref{eq}.

\end{document}